\numberwithin{equation}{section}
\newtheorem{Theorem}{Theorem}[section]
\newtheorem{thm}{Theorem}[section]
\newtheorem{prop}[Theorem]{Proposition}
\newtheorem{lem}[Theorem]{Lemma}
\theoremstyle{definition}
\newtheorem{defn}[Theorem]{Definition}
\newtheorem{rem}[thm]{Remark}
  \newcommand{\imp}{\mbox{$\ \Rightarrow\ $}}
\newcommand{\Imp}{\mbox{$\Longrightarrow$}}
\newcommand{\Iff}{\mbox{$\Longleftrightarrow$}}
\newcommand{\ifg}{\mbox{$\Leftrightarrow$}}
\def\proof{{\bf Proof.}\ }
\def\ie{\emph{i.e.}}
\def\Ie{\emph{I.e.}}
\def\pes{\emph{e.g.}}
\def\Pes{\emph{E.g.}}
\def\E{{\mathcal E}}
\def\F{{\mathcal F}}
\def\Gk{{\mathfrak G}}
\def\P{{\mathcal P}}
\def\Ql{{\mathcal Q}}
\def\U{{\mathcal U}}
\def\Hl{{\mathcal H}}
\def\N{{\mathbb N}}
\def\R{{\,\mathbb R}}
\def\Z{{\mathbb Z}}
\def\WW{{\mathbb{W}}}
\def\WW{{\mathbb{W}}}
\def\II{{\mathbb{I}}}
\def\LL{{\mathbb{L}}}
\def\Zai{{\Z^\II}}
\def\Nai{{\N^\II}}
\def\Ak{{\mathfrak A}}
\def\Nk{{\mathfrak N}}
\def\ahu{{\aleph_{1}}}
\def\ag{{\alpha}}
\def\bg{{\beta}}
\def\eg{{\epsilon}}
\def\fg{{\varphi}}
\def\kg{{\kappa}}
\def\og{{\omega}}
\def\cg{{\gamma}}
\def\sg{{\sigma}}
\def\Cg{{\Gamma}}
\def\ao{{\aleph_{0}}}
\def\max{\mbox{\rm max}\;}
\def\+#1{\vec{#1}}
\def\ak{{\mathfrak{a}}}
\def\mk{{\mathfrak{m}}}
\def\zk{{\mathfrak{z}}}
\def\nk{{\mathfrak{n}}}
\def\bk{{\mathfrak{b}}}
\def\Sk{{\mathfrak{S}}}
\def\Ck{{\mathfrak{C}}}
\def\Mk{{\mathfrak{M}}}
\def\Nk{{\mathfrak{N}}}
\def\*{\times}
\def\st{{}^{\star}\!\, }
\def\0{\emptyset}
\def\7{\setminus}
\def\_{\overline}
\def\eq{\simeq}
\def\<{\prec}
\def\ll{\preceq}
\def\o+{\bigoplus}
\def\ult#1#2{^{#1}_{\; #2}}
\def\incl{\subseteq}
\def\linc{\supseteq}
\def\pincl{\subset}
\def\la{\langle}
\def\ra{\rangle}
\def\zfc{\textsf{ZFC}}
\def\qed{\hfill $\Box$}
\def\st{such\ that}
\def\Eq{$\mathsf{E4} $}
\def\PP{$(\mathsf{PP}) $}
\def\FAP{$\mathsf{FAP} $}
\def\SubP{\sf{(SubP)}}
\def\Diff{\sf{(Diff)}}
\def\CP{\sf{(CP)}}
\def\Ed{$\mathsf{E2} $}
\def\Et{$\mathsf{E3} $}
\def\Ecq{\sf{(E5)} }
\def\Eo{\textsf{(E0)}}
\def\E15{\textsf{(E1-5)}}
\def\Eu{\textsf{(E1)}}
\def\Ed{\textsf{(E2)}}
\def\Et{\textsf{(E3)}}
\def\AP{\textsf{(AP)}}
\def\Eq{\textsf{(E4)}}
\def\EP{\textsf{(EP)}}
\def\HP{\textsf{(HP)}}
\def\WHP{\textsf{(WHP)}}
\def\diser{$(*)$}
\def\FIP{\textsf{FIP}}
\def\UP{\textsf{(UP)}}
\begin{document}

\title{A Euclidean comparison theory for the size of sets}

\author{Marco Forti}

\maketitle

\begin{abstract}\label{abs}
We discuss two main ways in comparing and evaluating the size of sets: the ``Cantorian" way, grounded on the so called Hume principle (two sets have equal size if they are equipotent), and the ``Euclidean" way, maintaining  Euclid's principle ``the whole is greater than the part".

The former being deeply investigated since the very birth of set theory, we  concentrate here on the ``Euclidean" notion of  size (\emph{numerosity})
 that maintains the Cantorain defiitions of \emph{order, addition and multiplication}, while preserving the natural 
  idea
that \emph{a set is (strictly) larger than its proper subsets}.
    These numerosities 
    satisfy the five Euclid's
\emph{common notions}, and constitute a
\emph{semiring
of nonstandarda natural numbers}, thus enjoying the best arithmetic.

Most relevant is the 
    \emph{natural set theoretic definition} of the set-preordering:\\
    ${}~~~~~~~~~~~~~~~~~~~~~~~~~~~~~~~~~~~~~~~~~~~~X\< Y\ \ \Iff\ \ \exists Z\ X\eq Z\pincl Y~~~~~~~~~~~~~~~~~~~~~~~~~~~~~~~~~~~~~~~~$
     Extending this ``proper subset property" from countable to uncountable sets 
    has been the main open question in this area from the beginning of the century.

\end{abstract}

\section*{Introduction}\label{intr}
In the history of Mathematics the problem of comparing (and measuring) the size of mathematical 
objects has been extensively studied. In particular, different methods have been experienced for associating to sets suitable  kinds 
of numbers.

 A satisfactory notion of \emph{measure of size} 
should abide by the famous five \emph{common notions} 
of Euclid's Elements, which traditionally 
embody the properties of any kind of  \emph{magnitudines} (see \cite{Eu}):
\begin{enumerate}
\item[\Eu]
\emph{Things equal to the same thing are also equal to one another.}
\item[\Ed]
\emph{And if equals be added to equals, the wholes are equal.}
\item[\Et]
\emph{And if equals be subtracted from equals, the remainders are equal.}
\item[\Eq]
\emph{Things \emph{[exactly] applying onto} one  another are  equal to one
another.}\footnote
{~Here we translate $\epsilon\phi\ag\rho\mu o\zeta o\nu\tau\!\ag$
by ``[exactly] applying onto", instead of the usual ``coinciding with". 
As pointed out by T.L.~Heath in his commentary \cite{Eu}, 
this translation seems to give a more appropriate
rendering of the mathematical usage of the verb
$\epsilon\phi\ag\rho\mu o\zeta \epsilon\iota\nu$.}
\item[\Ecq]
\emph{The whole is greater than the part.}
\end{enumerate}

Following the ancient praxis of \emph{comparing} magnitudes
of \emph{homogeneous objects},
 a very general notion of size of 
sets,\footnote{~so as to
comprehend cardinality, measure, probability, numerosity, \emph{etc}.}
whose essential property is  \emph{general comparability of sizes},
can be given through a total preordering\footnote{~Recall that a 
preordering is a \emph{reflexive} and \emph{transitive} (binary) 
relation; it is \emph{total} if any two elements are comparable; 
the corresponding 
 \emph{equivalence} $\simeq$ is $A\simeq B\ \ifg\ A\preceq B\preceq A$; the corresponding \emph{strict inequality} $\prec$ is $A\prec B\ \ifg\ A\preceq B\not\preceq A$ .}
$\preceq$ of sets according to their sizes,  with the intended meaning that \emph{equinumerosity, \ie\ equality of size}, is the corresponding equivalence relation $\ A\eq B\ \Iff\ A\ll B\ll A$, so
the first Euclidean common notion

\smallskip\noindent
\Eu\ ~~~~~ \ \emph{Things equal to the same thing are also equal to one another}

\smallskip\noindent
 is subsumed, because $\eq$ is an equivalence.

This comparison
should  naturally extend \emph{set-theoretic  inclusion}
 (and be \emph{consistent with equinumerosity}). So one has to assume that
$$A\ll B\ \ \Iff\ \  \exists A', B'  \big(A\eq A'\incl B'\eq B\big),$$

Clearly. the \emph{equivalence classes}  modulo $\eq$,~ called the \emph{magnitudines} of the theory,
are \emph{totally ordered} by 
the ordering induced by $\ll$. 

\medskip
In set theory the usual measure for the size of sets is  
 is given by the classical Cantorian
 notion of \emph{``cardinality''},  whose ground is the so 
 called \emph{Hume's Principle}
 \begin{center}
      \emph{Two sets 
  have the same size if and only if there exists a biunique 
  correspondence between  them.}
 \end{center}
This principle amounts to encompass the \emph{largest} possible class of ``exact" applications (\emph{congruences}) admissible in the fourth Euclidean notion, namely \emph{all bijections}. This 
assumption might seem natural, and even \emph{implicit in the notion of 
counting}; 
but it strongly violates the equally natural \emph{Euclid's principle} applied to 
(infinite) sets
 \begin{center}  
    \emph{A set 
     is greater than its proper subsets,}
 \end{center}
which in turn seems implicit in the notion of  \emph{magnitudo}, even for sets.

So one could distinguish  two basic kinds of \emph{size theories for sets}:
\begin{itemize}
\item A  size theory  is  \emph{Cantorian}
if, for all $A,B$:

    $(\mathsf{{HP}})$  
    ${(Hume's Principle)~~~}\ A\ll B\ \ \Iff\ \ \exists f:A\to B$ $1$-to-$1$
    
    (Cantor-Bernstein's theorem making this an  equivalent formulation)
\item A  size theory is  \emph{Euclidean}
if, for all $A,B$: 

   $(\mathsf{{EP}})$ 
    ${(Euclide's Principle)}\ \   \ A\< B \ \Iff\ \   \exists A', B'  \big(A\eq A'\pincl B'\eq B\big).$

(Remark the use of \emph{proper inclusion} in defining \emph{strict comparison} of sets)
\end{itemize}

 The spectacular development of Cantorian set theory 
in the entire twentieth century has put Euclid's principle in 
oblivion. Only the new millennium has seen a limited resurgence of 
proposals of so called ``numerosities" including it, at the cost of severe limitations of Hume's principle
(see  the excellent survey \cite{mancu} and the references therein). 

The main value of the Euclidean theories is the excellent arithmetic they allow, namely that of an \emph{ordered semiring},  to be contrasted with the awkward cardinal arithmetic.
However, the main problem arising in the Euclidean theories lies in the fact that the
\emph{preordering} of sets,  defined by \emph{the natural set theoretic characterization}
 \EP, should  induce the ``algebraic"   \emph{total} ordering of 
 the semiring of numerosities,
   which in turn is equivalent to
 the  \emph{subtraction} property   
$$ {\Diff}~~~~~~~~~~A\prec B\ \Iff\
  \exists C\ne\0\ \big(   A\cap C=\0\ \&\ A\cup C\eq B\big).~~~~~~~~$$
A notion of ``number of elements'' (\emph{numerosity}) that completely fulfills
 the Euclidean principle \EP\
 has been found up to now only for special \emph{countable }sets,  in \cite{BDNlab},  and generalized later to point sets on \emph{countable lines}  in \cite{QSU, FM}.  
The consistency of the full principle \EP\ for  \emph{uncountable sets} appeared problematic from the beginning, and this question has been posed in several papers (see \cite{BDNFar,BDNFuniv,DNFtup}), where only
the \emph{literal set-theoretic translation} of
      the fifth Euclidean notion, \ie\ the weaker principle requiring the sole left pointing arrow of $ \EP$,  
    $$ \Ecq{~~~~~~~~~~~~~~~~~~~~~~~~~~  ~}A\pincl B\ \ \Imp\ \ A\< B{~~~~~~~~~~~~~~~~~~~~~~~~~},$$
      has been obtained.
       (On the other hand, it is worth recalling that also the totality of the Cantorian weak cardinal ordering 
had to wait a couple of decades before \emph{Zermelo's new axiom of choice} established it!)

In this paper we present 
 a Euclidean numerosity theory for suitable collections $\WW$ of point sets of  finite dimensional spaces over lines  $\LL$ of arbitrary cardinality,  satisfying the \emph{full principle} \EP; this theory  might be extended to the whole universe of sets $V$ following the procedure outlined in \cite{BDNFar}, under 
mild  set theoretic assumptions, \pes\ Von Neumann's axiom, that gives a (class-)bijection between the universe $V$ and the class $Ord$ of all ordinals.

\section{Euclidean or\! Aristotelian\! comparison\! theories}\label{ct}
As pointed out in the introduction, in \emph{comparing} magnitudes
of \emph{homogeneous objects},
the essential property is the \emph{general comparability of sizes}, 
but a Euclidean comparison
 $\prec$  among
sets 
should  naturally extend \emph{set theoretic  inclusion $\pincl$}, according to the fifth Euclidean common notion,
 (and be \emph{consistent with equinumerosity}). 
\begin{defn}\label{compar}
Call \emph{Euclidean comparison theory} a pair $( \mathbb{W},\preceq) $
where
\begin{enumerate}
\item $\mathbb{W}$ is
 a family of sets 
 closed under  
%
\emph{binary unions and intersections},
 \emph{subsets}, 
 and also
under \emph{ Cartesian products};
\item $\,\preceq\,$ is a \emph{total preordering}
such that, for all $A, B\in\mathbb{W},$
$${\EP} ~~~~~~~~~~~~~~~~~~~~~~A\prec B\ \ \Iff\ \, \exists B'\in\WW\ A\subset B'\eq   B~~~~~~~~~~~~~~~~~~~~~~~~~$$
 \item The  \emph{universe} of the theory
$\left( \mathbb{W},\preceq \right) $ is the union set
$W=\bigcup \mathbb{W},$
and
a permutation of the universe $\sg\in\Sk(W)$\footnote{~ $\Sk (X)$ denotes the group of all permutations of a set $X$.} is a \emph{congruence} for 
$(\WW,\preceq)$) if 

$\ {}~~~~~~~~~~~~~~~~~~~~~~~for \ all\ A\in \mathbb{W}\ \ \sg(A)\in\WW \ {and} \ 
\sg(A)\simeq A.~~~~~~~~~~~~~~~~~~~~~
$
\end{enumerate}
The \emph{quotient set} $\ \Nk=\mathbb{W}/\eq\,$ is the \emph{set of numerosities} of the theory, and the canonical map $\ \nk:\WW\to\Nk$ is the \emph{numerosity function} of the theory. 
\end{defn}
Clearly $\Nk$
is \emph{totally ordered} by 
the ordering induced by $\ll$. 

 \subsection{Natural congruences}\label{cong}
First of all, once the general Hume's principle cannot be assumed,
 the fourth 
Euclid's common notion
\begin{center}
\Eq\  ~~~  \emph{Things exactly applying  onto one 
     another are equal to one another}~~~~~~~
\end{center}
is left in need of an adequate interpretation that
 identifies an appropriate  class of natural \emph{exact applications that preserve size} (called \emph{congruences}).
So we have to isolate a   \emph{family of congruences}, for the considered notion of size,
 as a subset $\Ck(W)$ of
the group of all permutations $\Sk (W)$ of the universe $W$.

  Call
 ``natural transformation'' 
 of tuples any biunique correspondence $\tau$ that \emph{preserves support},
 \ie\ the set of components of a tuple:
    $supp(a_1,\ldots,a_n)=\{a_1,\ldots,a_n\}$): 
 these applications are useful when 
 comparing sets of different dimensions, 
so they  seem a good basis to be put in $\Gk(W)$ by any Euclidean theory involving sets of tuples.\footnote{
~These transformations may \emph{not preserve dimension}. When dimension is relevant, 
\pes\ when the diagonal $D_A=\{(a,a)\mid a\in A\}$ shoud require a different size from $A$, one could restrict consideration to permutation of components and/or rearranging of parentheses, ~\ie\  $\tau:(a_1,\ldots,a_n)\mapsto[a_{\sg1},\ldots,a_{\sg n}]$, where $[\ldots]$ represents any distribution of parentheses.}
So we assume that all natural transformation of tuples belong to $\Gk(W),$ and
  postulate
 $${\CP}\, ({Congruence\ Principle})\ \ 
   \tau\in\Ck(W)\  \Imp\  \forall A\in \mathbb{W}\,  \big(\tau[A]\in\WW \ \mathrm{\&} \ \tau[A]\simeq A\big)$$

\subsection{Addition of numerosities}\label{add}
In general one wants not only \emph{compare}, but also \emph{add and subtract
 magnitudines}, according to the second and third Euclidean common notions

\noindent 
\Ed\ \emph{{~~~~~~~~~}... if equals be added to equals, the wholes are equal.{~~~~~~~~~~~~~~~~}}

\noindent \Et\ ~~\emph{...  if equals be subtracted from equals, the remainders are equal.}

\medskip
When dealing with sets, it is natural to take \emph{addition} to be \emph{(disjoint) union}, and \emph{subtraction} to be \emph{(relative) complement}, so
\smallskip  \noindent  
 it is convenient to call \emph{additive} a Euclidean comparison theory verifying the following principle
 for all $A,B\in\WW$:
\begin{description}
\item[$(\mathsf{{AP}})$]  (\emph{Aristotle's Principle})\footnote{~
This priciple has been named \emph{Aristotle's Principle} in \cite{QSU,FM}, 
because it resembles     Aristotle's preferred example of a ``general axiom". It is especially relevant in this context, because \AP\ implies both the second and the third 
     Euclidean common notions, and also the fifth 
    provided that no 
     nonempty set is equivalent to $\0$, see below. 
}
    ${~~~~~~~~~~~}
    A\eq B\ \Longleftrightarrow\
    A\7 B \eq B\7 A.
    $
\end{description}
This principle yields both the second and third Euclidean common notions, 
namely
\begin{prop}[\cite{DNFtup,QSU}]${}$\ Assuming \rm{$\AP$},
the following properties hold for all $A,B,A',B'\in\WW$:
\begin{description}
\item[$(\mathsf{{E2}})$] ${~~~~~}  A\simeq A^{\prime },\ B\simeq B^{\prime }, A\cap B=A'\cap B' =\0\ \ \Imp\ \ A\cup B\simeq A'\cup B'
$
\item[$(\mathsf{{E3}})$] ${~~~~~~~~} B\subset
A,\ B^{\prime }\subset A^{\prime },
  A\simeq A^{\prime },\ B\simeq B^{\prime }\Longrightarrow A\backslash B\simeq
A^{\prime }\backslash B^{\prime }.$
\end{description}
\end{prop}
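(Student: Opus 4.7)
\emph{Proof plan.} The engine of the argument is a cancellation lemma that falls out of \AP\ in one line: \emph{if $X\cap Z=Y\cap Z=\0$, then $X\cup Z\eq Y\cup Z$ iff $X\eq Y$.} Indeed the hypothesis forces $(X\cup Z)\7(Y\cup Z)=X\7 Y$ and $(Y\cup Z)\7(X\cup Z)=Y\7 X$, and two applications of \AP\ (one to the pair $X\cup Z,Y\cup Z$, one to the pair $X,Y$) collapse both sides of the biconditional to the single statement $X\7 Y\eq Y\7 X$.

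For $\mathsf{(E2)}$, using $A\cap B=\0=A'\cap B'$ I would decompose
$$A\7 A'=(A\cap B')\sqcup\bigl(A\7(A'\cup B')\bigr),\quad A'\7 A=(A'\cap B)\sqcup\bigl(A'\7(A\cup B)\bigr),$$
and analogously $B\7 B'$ and $B'\7 B$. Then \AP\ applied to $A\eq A'$ and to $B\eq B'$ yields two equivalences whose corner pieces $A\cap B'$ and $A'\cap B$ appear in swapped positions. A short cancellation-and-transitivity dance --- adjoin $B\7(A'\cup B')$ to the first equivalence and $A'\7(A\cup B)$ to the second via the cancellation lemma, so that the two equivalences chain into a single one sharing $A\cap B'$ on both sides, then cancel $A\cap B'$ --- telescopes the corners away and leaves
$$(A\cup B)\7(A'\cup B')\eq(A'\cup B')\7(A\cup B),$$
whence a backward application of \AP\ delivers $A\cup B\eq A'\cup B'$.

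The argument for $\mathsf{(E3)}$ is the same machine fed a different decomposition. Writing $M=A\7 B$ and $M'=A'\7 B'$, the hypotheses $B\incl A$, $B'\incl A'$ give
$$A\7 A'=(B\7 A')\sqcup(M\7 A'),\quad B\7 B'=(B\cap M')\sqcup(B\7 A'),$$
together with their primed mirrors. \AP\ on $A\eq A'$ and on $B\eq B'$ again produces two equivalences sharing corner pieces $B\7 A'$ and $B'\7 A$; the same cancel-and-chain routine eliminates them and delivers $M\7 M'\eq M'\7 M$, whereupon \AP\ concludes $M\eq M'$. The one real obstacle in either case is the disjointness bookkeeping: before each use of the cancellation lemma one must verify that the adjoined piece misses every piece on both sides of the current equivalence. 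Each such check reduces trivially to one of the given hypotheses ($A\cap B=\0=A'\cap B'$ for $\mathsf{(E2)}$, $B\incl A$ and $B'\incl A'$ for $\mathsf{(E3)}$), but the sheer quantity of them is what makes the write-up the main technical effort.
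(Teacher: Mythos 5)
Your proof is correct. The paper does not actually present an argument for this proposition --- it writes ``We omit the proofs, which would be identical to those given in \cite{DNFtup,QSU}'' --- so there is no in-paper proof to compare against; the cancellation lemma you extract from \AP\ and the Venn-piece decompositions you feed it are the standard route, and I would expect them to match the cited references in substance. For the record: your cancellation lemma is sound (under $X\cap Z = Y\cap Z = \0$ one has $(X\cup Z)\7(Y\cup Z)=X\7 Y$ and symmetrically, so \AP\ applied to the pair $X\cup Z,\,Y\cup Z$ and then to the pair $X,\,Y$ gives the stated biconditional); your decompositions for \textsf{(E2)} and \textsf{(E3)} are algebraically correct; and the disjointness checks needed at each adjoin/cancel step do all reduce to the stated hypotheses, as you claim. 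The only place you are terse is the ``same cancel-and-chain routine'' claim for \textsf{(E3)}: the concrete moves are to adjoin $M'\cap B$ to the \AP-equivalence coming from $A\eq A'$ and to adjoin $M\7 A'$ to the one coming from $B\eq B'$, after which the sides that match both equal $(M'\cap B)\cup(B\7 A')\cup(M\7 A')$; transitivity chains the two, and cancelling $B'\7 A$ (disjoint from each of the remaining four pieces by $M'\cap B'=\0$, $B'\incl A'$ and $M\incl A$) yields $(M\7 A')\cup(M\cap B')\eq(M'\7 A)\cup(M'\cap B)$, i.e.\ $M\7 M'\eq M'\7 M$, and a final application of \AP\ gives $M\eq M'$.
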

We omit the proofs, which would be identical to those given in 
\cite{DNFtup,QSU}.
\qed

\bigskip
\begin{rem}
The sole principle \AP\ is not enaugh for providing a Euclidean comparison:
for instance, it is fulfilled both by Peano and Lebesgue 
measures, by taking\; $
\mathbb{W}$  to be a suitable subset of $\ \bigcup_{n\in\omega}\mathbb{R}^{n}
$. In general,  various probability spaces might be taken to be $\WW$, 
but these theories are non-Euclidean, unless the probability is \emph{regular} 
(\ie\ only the whole space has probability $1$).
 Actually, by simply adding to \AP\ the natural principle
 $$\Eo~~~~~~~~~~~~~~~~~~~~~~~~~~~~~~~\ \ A\eq\0\ \ \Imp\ \ A=\0,~~~~~~~~~~~~~~~~~~~~~~~~~~~~~~~~~~~~~$$
one obtains the literal set theoretic version of the fifth Euclidean notion, namely
 $${ \Ecq}{~~~~~~~~~~~~~~~~~~~~~~~~~~~~~~~~  ~}A\pincl B\ \ \Imp\ \ A\< B{~~~~~~~~~~~~~~~~~~~~~~~~~~~~~~~~~~~},$$

Clearly these assumptions yield that $\0$ is \emph{the unique least set}, and that 
 \emph{all singletons have equal size} and come immediately after $\0$,
  for no nonempty set
can be smaller than a singleton. Moreover
 the \emph{successor} of the size of a given set is obtained
 by adding a single element, and similarly the \emph{immediate predecessor} is obtained by removing one element,
so the induced total ordering is \emph{discrete}. Hence we may assume without loss of generality that 
all \emph{finite} sets receives \emph{their number of elements} as size
(thus the natural numbers $\N$ can be viewed as an \emph{initial segment} of all numerosities).
\end{rem}

Call \emph{Aristotelian} a size theory satisfying both principles \AP\ and \Eo:\footnote{~ To be sure, Aristotle had never accepted a theory where $1=2$!}
the important feature of Aristotelian theories is that \AP\ and \Eo\ are \emph{necessary and sufficient conditions}  for obtaining \emph{an excellent additive arithmetic of the infinite}: simply define
an \emph{addition  of numerosities}, \ie\  equivalence classes modulo $\eq$,
 by means of disjoint union
 $$\nk(A)+\nk(B)=\nk(A\cup B)\ \ for\ all \ \ A,B\in\WW \  such\ that\ \ A\cap B=\0,$$
  then

\begin{thm}\label{mon}
Let $\left( \mathbb{W},\preceq \right) $ be an Aristotelian comparison theory. Then
an additon can be defined on
 the 
corresponding
 set of numerositiess $\Nk=\mathbb{W}/\eq\,$~ in such a way that
 $$\nk(A)+\nk(B)=\nk(A\cup B)+\nk(A\cap B)\ \ for\ all \ \ A,B\in\WW,$$
 $$ \exists C\not\incl A \ ( \nk(A)+\nk(C)=\nk(B))\ \ \Imp\ \ \nk(A)<\nk(B),$$
and then 
$(\Nk,+,0,\le)$ is a commutative cancellative zerosumfree  monoid
whose algebraic  ordering\footnote{~
  Recall that any cancellative zerosumfree monoid comes
with the \emph{algebraic} partial ordering $\le$ defined
by letting\ 
$\ a\le b\ \Longleftrightarrow  \exists c.\,(\,a+c=b\,);$
this ordering is total if and only if the monoid is  \emph{semisubtractive} \ie\ for all $a,b$ there exists $c$ such that either $a+c=b$ or $b+c=a$.
Hence the preordering $\ll$ on sets induces the algebraic ordering  of $\Nk$ if and only if the latter is total.} 
$\le$ is weaker than that induced by the total preordering $\ll$.
Therefore $\Nk$ can be isomorphically embedded into the nonnegative part 
$\,\Ak^{\le 0}$ of an ordered abelian group $\,\Ak$.

The monoid  $\Nk$ is semisubtractive
if and only if $\left( \mathbb{W},\preceq \right) $ is Euclidean, and then 
$$ \exists C\not\incl A \ ( \nk(A)+\nk(C)=\nk(B))\ \ \Iff\ \ \nk(A)<\nk(B),$$
so $\ll$ induces the algebraic ordering of $\Nk$, and
$\,\Nk\cong\Ak^{\le 0}$.
\end{thm}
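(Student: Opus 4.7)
The plan is to define $+$ on $\Nk$ via disjoint representatives, then derive every arithmetical property from \textsf{(E2)}, \textsf{(E3)}, and \Eo. For each pair $A,B\in\WW$ I would pass to disjoint copies $A^{\ast}=A\times\{0\}$, $B^{\ast}=B\times\{1\}$; the map $a\mapsto(a,0)$ (resp.\ $b\mapsto(b,1)$) is a natural transformation of tuples, hence by \CP\ these copies stay in $\WW$ with $A^{\ast}\simeq A$ and $B^{\ast}\simeq B$. Define $\nk(A)+\nk(B):=\nk(A^{\ast}\cup B^{\ast})$; well-definedness is immediate from \textsf{(E2)}. Commutativity and associativity are read off from union; $\nk(\emptyset)$ is the identity; cancellation follows from \textsf{(E3)} applied to $A^{\ast}\sqcup B^{\ast}\simeq A^{\ast}\sqcup C^{\ast}$; and zerosumfreeness follows from \Eo, since $\nk(A)+\nk(B)=0$ forces $A\cup B\simeq\emptyset$, hence $A=B=\emptyset$. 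The inclusion--exclusion identity $\nk(A)+\nk(B)=\nk(A\cup B)+\nk(A\cap B)$ comes from the partitions $A=(A\setminus B)\sqcup(A\cap B)$, $B=(B\setminus A)\sqcup(A\cap B)$ and $A\cup B=(A\setminus B)\sqcup(A\cap B)\sqcup(B\setminus A)$.

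Next I would establish the comparison clauses. For the implication, suppose $\nk(A)+\nk(C)=\nk(B)$ with $C\not\subseteq A$; set $D=C\setminus A\ne\emptyset$, and pass to a disjoint copy $D^{\ast}\simeq D$ with $A\cap D^{\ast}=\emptyset$. Then $\nk(A\cup D^{\ast})=\nk(A)+\nk(D^{\ast})\le\nk(A)+\nk(C)=\nk(B)$, while $A\pincl A\cup D^{\ast}$ yields $\nk(A)<\nk(A\cup D^{\ast})$ by \Ecq, so $\nk(A)<\nk(B)$. The same disjoint-representative trick shows that the algebraic order $a\le b\iff\exists c\,(a+c=b)$ is weaker than the preordering on $\Nk$: from $\nk(A)+\nk(C)=\nk(B)$ one obtains $A\cup C^{\ast}\simeq B$ with $A\subseteq A\cup C^{\ast}$, and \Ecq\ gives $\nk(A)\le\nk(B)$.

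Finally I would handle the equivalence with semisubtractivity and deduce the embedding. If \EP\ holds and $\nk(A)\ne\nk(B)$, totality of $\preceq$ gives say $A\prec B$, whence $A\pincl B'\simeq B$ for some $B'\in\WW$, so $\nk(A)+\nk(B'\setminus A)=\nk(B)$ and $\Nk$ is semisubtractive. Conversely, if $\Nk$ is semisubtractive and $A\prec B$, the alternative $\nk(B)+\nk(C)=\nk(A)$ would force $\nk(B)\le\nk(A)$ by the previous paragraph, contradicting $A\prec B$; hence $\nk(A)+\nk(C)=\nk(B)$ with $C\ne\emptyset$, and then $B':=A\cup C^{\ast}$ (with $C^{\ast}\simeq C$ disjoint from $A$) witnesses \EP. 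Under \EP, both orderings are total and the algebraic one refines the set-theoretic preorder, so they coincide, yielding the ``iff'' strengthening. The isomorphism $\Nk\cong\Ak^{\le 0}$ comes from the standard Grothendieck construction on the commutative cancellative zerosumfree monoid $\Nk$, whose ordered group of differences has $\Nk$ as its nonnegative cone precisely when the algebraic order is total. The only real subtlety I anticipate is the systematic appeal to \CP\ to produce disjoint representatives that remain inside $\WW$; every other step is a direct translation of an Aristotelian axiom.
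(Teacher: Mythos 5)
Your strategy is essentially the paper's: define addition on $\Nk$ by disjoint union, derive monoidal properties from \textsf{(E2)}, \textsf{(E3)}, and \Eo, characterise semisubtractivity via \EP, and embed $\Nk$ into the ordered group of formal differences. The one genuine issue is your justification for the existence of disjoint representatives. You claim that $a\mapsto(a,0)$ is a \emph{natural transformation of tuples} and hence a congruence by \CP. But the paper defines natural transformations to be \emph{support-preserving} bijections, and $a\mapsto(a,0)$ changes the support (it adjoins $0$ to $\mathrm{supp}(a)$), so it is \emph{not} a natural transformation in the paper's sense, and \CP\ does not apply to it. The mechanism actually intended by the paper to produce disjoint equinumerous copies is \UP\ (equinumerosity of $A$ with $A\times\{w\}$ when the latter is disjoint from $A$), together with closure of $\WW$ under Cartesian products; but \UP\ is introduced only later, as part of the definition of a \emph{numerosity} rather than of an Aristotelian comparison theory. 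The paper's own proof of this theorem simply does not address the existence of disjoint representatives — it implicitly assumes they are available — so you correctly identified the subtlety, but the particular appeal to \CP\ you make is wrong and needs to be replaced by an explicit hypothesis (closure under multiplication by a suitable singleton, or \UP). The remaining steps — well-definedness from \textsf{(E2)}, cancellation from \textsf{(E3)}, zerosumfreeness from \Eo, the modular identity via the three-block partition of $A\cup B$, the strict inequality from \Ecq\ combined with cancellation, the biimplication with semisubtractivity, and the Grothendieck-type embedding with $\Nk\cong\Ak^{\ge0}$ exactly when the algebraic order is total — all agree with the paper's argument, and you flesh out several points the paper leaves terse.
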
 

\begin{proof}
First of all,
$(\Nk,+,0,\le)$ is a a \emph{commutative cancellative} \emph{monoid}, because
 addition 
is \emph{associative} and \emph{commutative} by definition, and the property
  \Et, that follows from \AP, provides the \emph{cancellation property}
 $$\nk(A)+\nk(C)=\nk(B)+\nk(C)\ \ \Imp\ \ \nk(A)=\nk(B).$$
  $\Nk$ is \emph{zerosumfree} because
 $0=\nk(\0)$ is the unique additively neutral element, and so
 $$\ \nk(A)+\nk(B)=0=\nk(\emptyset)\ \ \Imp\  \ A=B=\emptyset.$$
 
 Finally \EP\ is equivalent to semisubtractivity, because 
 given $A,B\in\WW$ there exists $C\in\WW$, disjoint from both $A$ and $B$, such that either $A\cup C\eq B$ or $B\cup C\eq A$, and so the
algebraic  ordering  of $\Nk$ coincides with that induced by the total preordering  $\ll$.

 The monoid $\Nk$ generates an abelian group $\Ak$, whose elements can be identified with the equivalence classes of the \emph{differences} $\nk(A)-\nk(B)$ modulo the equivalence 
$\ \ \ \nk(A)-\nk(B)\approx\nk(A')-\nk(B')\  \Iff\ \ \nk(A')+\nk(B)=\nk(A)+\nk(B').$
Clearly one has 
$\  \nk(A)-\nk(B)> 0$ in $\Ak$ if  $\exists C\ne\0\ (\nk(A)=\nk(B)+\nk(C)$, and the reverse implication holds if and only if $\,\Nk$ is semisubtractive.

\qed \end{proof}

The fact that the preordering $\<$  on sets induces the  \emph{algebraic total ordering on }$\Nk,$
yields the  \emph{``most wanted Subtraction Principle"} of \cite{BDNFar}
$$\ {\Diff}~~~~~~~~~~    \emph{ $A\< B\ \ \Iff\ \ \exists C\ne\0  \ \big(C\cap (A\cup B)=\0,
      \  
    (C\cup A)\eq B\big)$.~~~~~~~~~~~~~~~~~~}$$
Clearly the equivalence class  of the set $C$ is uniquely determined,  and in the group $\Ak$  every element has the form
$\pm\nk(C)$ with $\nk(C)\in\Nk$.

The  problem of the relative consistency of  \EP\  with \AP\ 
(thus yielding the Subtraction Principle) has been posed in several papers dealing with 
 \emph{Aristotelian} notions of size for sets (see \pes\  \cite{BDNFar,BDNFuniv}). However a positive answer has been obtained, up to now, only for \emph{countable} sets in \cite{DNFtup,QSU,FM}, thanks to the consistency of \emph{selective }or \emph{quasiselective ultrafilters on} $\N$. The  consistency of the Subtraction Principle with both {\Ecq}and \AP\
 for sets of arbitrary cardinality (which is equivalent to the existence of Euclidean ultrafilters, as proved in \cite{DNFeu})
 is in fact the main result of this paper, but leaves open the consistency problem for the conjuction of full \EP\ and \AP. 

\medskip
\begin{rem}
Let us call \emph{weakly additive} a comparison theory satisfying the condition \Ed, but not \Et.
It is well known that any Cantorian theory is weakly additive, in fact 
 the 
corresponding
set of magnitudines $\Mk=\mathbb{\WW}/\eq\,$~
can be identified with a set of \emph{cardinal numbers}. So the 
corresponding sum of infinite numbers is trivialized by the 
general equality
$$\ak +\bk= \max\{\ak,\bk\},$$
and  cannot admit an inverse operation. Actually, the very failure of the Euclid's principle has been taken as \emph{definition of infinity} by Dedekind.
\end{rem}

    
\subsection{Multiplication of numerosities}\label{mult}
In classical mathematics, only \emph{homogeneous} magnitudes are \emph{comparable}, and geometric figures having different dimensions are never compared,  so a \emph{multiplicative} version of Euclid's second common notion
\begin{center}
   \emph{...  if 
        equals be multiplied by equals, the products are equal}
\end{center}
was not considered, in the presence of different ``dimensions''.  

On the other hand,  in modern mathematics a single
set of ``numbers", the real numbers $\R$,
is used as a common scale for
 measuring the size of figures of any dimension.
In a general set theoretic context it seems natural to consider abstract sets as homogeneous mathematical objects, without distinctions based on dimension,
and Georg Cantor introduced his theory of \emph{cardinal numbers} in order to give a measure of size of arbitrary sets. In the same vein, we introduce the notion of \emph{numerosity}, aiming to provide a general Euclidean measure of size for sets, more adherent to the classical conception of \emph{magnitudo}.

A satisfying \emph{arithmetic} of numerosities needs a \emph{product} (and a corresponding \emph{unit}), and
we adhere to the natural Cantorian choice
 of introducing a product 
through \emph{Cartesian products},\footnote{~
 \emph{CAVEAT}: the Cartesian product is optimal when any two sets  $A, B$ are \emph{multipliable} in the sense that their Cartesian product is disjoint from their union, 
but when \emph{entire transitive universes} like $V_\kg, H(\kg)$, or $L$ are considered, the usual set-theoretic coding of pairs makes it untenable (\pes\ already $V_\og\*\{x\}\pincl V_\og$ for any $x\in V_\og$), hence, 
as already done for addition, in these cases 
one should assume the existence of suitable \emph{multipliable copies} of any set.
}
 and  taking \emph{singletons as unitary}.
Although the Cartesian product is neither commutative nor associative \emph{stricto sensu}, nevertheless the corresponding natural transformations have been taken  among the congruences in the set  $\Ck(W)$, hence are numerosity preserving.

 Moreover, it seems  natural  that multiplication by ``suitable"\footnote{~
As remarked above, not all singletons may be ``suitable" for a Euclidean theory.}
 singletons
 be such a ``congruence'' to be put into  $\Gk(W)$, so as to have at disposal disjoint equinumerous copies to be used in summing and multiplying numerosities.
In doing so, each product $A\*\{b\},\ b\in B$ may be viewed as a \emph{disjoint equinumerous copy of }$A$, thus  making 
  their (disjoint) union $A\*B$  the sum of ``$B$-many copies of $A$", in accord with the arithmetic interpretation of multiplication.

 \subsection{ Aristotelian and Euclidean numerosity theories}\label{num}

The preceding discussion leads to the following definition
\begin{defn}
An Aristotelian  comparison theory 
$\,( \mathbb{W},\preceq) $ is  a \emph {numerosity}\footnote{~
in the sequel, for sake of brevity, we often omit the specification `Aristotelian': actually, the principles  \PP\ and \UP\ together imply \Eo, otherwise all sets are null.}  theory if,
for all $A,B,C\in\WW,$ with $C\ne\0$, and all $w\in W$ 
\begin{itemize}
\item[\PP] 
$((A\cup B)\* C) \,\cap (A\cup B\cup C) =\0 \ \ \Imp\
\big(A\ll B \ \ \Iff \ \ A\* C\ll B\* C\big)$;  
\item[\UP]  $\ A\eq (A\* \{w\})$ for all $w$ such that $(A\*\{w\})\cap A=\0$.
\end{itemize}
The numerosity is \emph{Euclidean} if the full principle \EP\ holds.
\end{defn}
 The above  Principles provide the
 set of numerosities $\Nk=\mathbb{W}/\eq$
with the best algebraic properties, namely
\begin{thm}\label{eunum}
Let $\left\{ \mathbb{W},\preceq\right\} $ be a numerosity. Then  the set of numerosities
$\Nk$ has a natural structure of ordered semiring,
where addition corresponds to disjoint union and multiplication to Cartesian 
product. 
Therefore there exist an ordered ring $\Ak$ and an embedding
$
\nk:\,\mathbb{W}\rightarrow\Ak^{\geq 0}$
such that, for all $A,B\in\WW$,\

\smallskip\noindent
$\nk(A)+\nk(B)=\nk(A\cup B)+\nk(A\cap B),\
\nk(A\times B)=\nk(A)\cdot\nk(B),\
A\!\pincl\! B \imp \nk(A)\!\!<\!\nk(B)
$

\smallskip
In particular the five Euclid's Common Notions \E15\ are satisfied, and all finite sets receive their number of
elements as numerosity,  so $\Nk$ contains as initial segment an isomorphic copy of 
the natural numbers $\N$.

\end{thm}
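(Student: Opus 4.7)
The plan is to build on Theorem \ref{mon}, which already equips $\Nk$ with the structure of a commutative cancellative zerosumfree additive monoid satisfying $\nk(A)+\nk(B)=\nk(A\cup B)+\nk(A\cap B)$ and embeds it into the nonnegative part of an ordered abelian group $\Ak$. The monotonicity $A\pincl B\Imp\nk(A)<\nk(B)$ is immediate: writing $B$ as the disjoint union $A\cup(B\7 A)$ gives $\nk(B)=\nk(A)+\nk(B\7 A)$, and $\nk(B\7 A)\ne 0$ since $B\7 A\ne\0$, by \Eo\ and zerosumfreeness.

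The heart of the proof is to define multiplication by $\nk(A)\cdot\nk(B):=\nk(A\* B)$ and verify the semiring axioms. The basic technical tool I would set up first is a \emph{general-position lemma}: given any finite list $A_1,\ldots,A_k\in\WW$, iterated application of \UP\ produces equinumerous copies $A_i^{*}=A_i\*\{w_i\}$, with each $w_i$ chosen fresh, such that the $A_i^{*}$ together with any prescribed iterated Cartesian products among them satisfy whatever disjointness hypothesis is required. This reduces every subsequent calculation to a situation in which \PP\ applies directly. Well-definedness of $\cdot$ then follows: given $A\eq A'$ and $B\eq B'$, pass to general-position copies and apply \PP\ to each of the two inequalities comprising $A\eq A'$ to deduce $A\* B^{*}\eq A'\* B^{*}$; commutativity of $\*$, being a natural transformation on pairs and hence a congruence by \CP, lets one repeat the argument in the second coordinate.

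Commutativity and associativity of $\cdot$ come directly from \CP\ applied to coordinate-swap and re-bracketing; a singleton $\{w\}$ serves as multiplicative unit via \UP; and distributivity follows from the set-theoretic identity $(A\cup B)\* C=(A\* C)\cup(B\* C)$ combined with additivity, once the general-position lemma has arranged the needed disjointness. Order compatibility $\nk(A)\le\nk(B)\Imp\nk(A)\cdot\nk(C)\le\nk(B)\cdot\nk(C)$ for $C\ne\0$ is one direction of \PP. The additive group $\Ak$ of Theorem \ref{mon} thereby acquires a unique ring structure by bilinear extension, and the five Euclidean Common Notions \E15\ are all verified: \Eu\ from $\eq$ being an equivalence, \Ed\ and \Et\ from the preceding Proposition, \Eq\ from \CP, and \Ecq\ from the monotonicity established above. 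The main obstacle throughout is the careful bookkeeping behind the general-position lemma, since \PP\ is only available under a multipliability hypothesis that must be manufactured via \UP\ at each step.
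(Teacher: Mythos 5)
Your proposal is essentially the paper's approach: the paper cites Theorem~4.2 of \cite{QSU} and, building on Theorem~\ref{mon} for the additive monoid structure, defines $\nk(A)\cdot\nk(B)=\nk(A\times B)$, obtains commutativity and associativity from \CP\ (the rebracketing and coordinate-swap transformations are support-preserving, hence congruences), gets the unit from \UP, and extends the ordered abelian group $\Ak$ of Theorem~\ref{mon} to an ordered ring bilinearly. Your monotonicity argument via $B=A\cup(B\setminus A)$ plus zerosumfreeness and \Eo\ is the same observation the paper uses for \Ecq, and your reading of distributivity through $(A\cup B)\times C=(A\times C)\cup(B\times C)$ is exactly what is behind the cited QSU axioms.

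The one place where your write-up is more candid than rigorous is the ``general-position lemma.'' You assert that iterating \UP\ always manufactures fresh $w_i$ so that any prescribed list of Cartesian products becomes pairwise disjoint from the relevant unions, but \UP\ only applies when such a $w$ with $(A\times\{w\})\cap A=\emptyset$ actually \emph{exists} in $W$, and Definition~\ref{compar} does not by itself guarantee an inexhaustible supply of fresh unitary points. The paper faces the same issue: it confines the honest worry to the CAVEAT footnote about transitive universes, says one should ``assume the existence of suitable multipliable copies,'' and then in the concrete Punktmengen setting of Section~2 the supports make the disjointness manufacturable for free. So you are not wrong, but you should flag that the general-position device is an additional closure hypothesis on $(\WW,\preceq)$ rather than a consequence of the stated axioms; once that hypothesis is granted, the well-definedness of the product via \PP\ and the rest of the semiring verification go through exactly as you describe.
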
\label{eucl}
\begin{proof} 
    The proof is close to that of Theorem 4.2 in \cite{QSU}:
    here axiom {\CP}\ makes multiplication commutative and associative,
     Principles \AP, \PP\ and \UP\  are the Axioms \Eu,\Eq\ and \Et\ of \cite{QSU}, 
   and together imply
     \Eo\  (unless the numerosity is trivially $0$), hence also {\Ecq}, 
    while \Ed\ of \cite{QSU}\footnote{~
    Actually \Ed\ of \cite{QSU} is equivalent to the proper subset property of Subsection \ref{sub}, which is even stronger than \EP.
    }  can be replaced here by totality of the preordering $\ll$.

 We have already proved  Theorem \ref{mon} that $(\Nk,+,0,\le)$ is a commutative cancellative zerosumfree semisubtractive monoid, with the algebraic ordering $\le$ possibly weaker than that induced by the set-theoretic preordering $\ll$.
 Now also $(\Nk,\cdot,1)$ is a commutative and associative monoid, that is distributive w.r.t.~$+$, annihilated by $0$,  cancellative (because the Cartesian product of nonempty sets is nonempty), and has multiplicative unit  $1\!=\nk(\{w\})$ by \UP.
Finally, the abelian group $\Ak$ defined in the proof of Theorem \ref{mon} becomes an ordered ring by simply defining $\nk(A)\cdot\nk(B)=\nk(A\times B)$.

Clearly  there exists a \emph{unique} numerosity 
where $\WW$ is the family of all \emph{finite} sets, namely that given by 
the \emph{number of elements}, so the corresponding set of numerosities 
$\Nk$ is an isomorphic copy of 
the natural numbers $\N$, which is an initial segment of every semiring of numerosities.
   \qed
\end{proof}

 \section{Numerosity theories for ``Punktmengen"}\label{numpt}
 
In  this section we specify  the notion of  {(Aristotelian \rm{or} Euclidean) numerosity} for ``Punkt\-mengen" (finitary point-sets) over a ``line" $\LL$, \ie\ subsets $A\incl\bigcup_{n\in\N} \LL^n)$ \st\  $\{a\in A \mid supp(a)=i\}$ \textrm{ is}\ 
finite for all $i\in\LL^{<\og}$.  Actually, in a general set-theoretic context, there are no ``geometric" or ``analytic" properties to be considered: the sole relevant characteristic of the line $\LL$ remains  \emph{cardinality}, so a possible choice seems to be simply identifying $\LL$ with its cardinal $\kg$, thus obtaining the fringe benefit that no pair of ordinals is an ordinal, and Cartesian products may be freely used.\footnote{~
In any case it seems convenient to assume that the line $\LL$ is disjoint from its square $\LL^2$.}

Grounding on the preceding discussion, we  pose the following definition
\begin{defn}\label{Eunum}${}$\
 $(\WW,\ll)$ is a 
 \emph{numerosity theory} for ``Punktmengen" over $\LL$ if
 \begin{itemize}
  \item $\WW\incl \P(\bigcup_{n\in\N} \LL^n)$ is the collection of all finitary point sets over the line $\LL$;
  \item $\ll$ is a total preordering on $\WW$,  and $\eq$ the equivalence generated by $\ll$;
 \item   the following conditions are satisfied for all $A,B,C\in\WW$:
\begin{itemize}
\item[$(\mathsf{{AP}})$]$A\eq B\ \Longleftrightarrow\ A\7 B \eq B\7 A;$
  \item[\PP] 
 $A\eq B \ \ \Iff \ \ A\* C\eq B\* C$ (for all $C\ne\0$); 
\item[\UP] $A\eq A\* \{w\}$ for all $w\in W=\bigcup\WW$;
\item[\CP]  $\tau[A]\simeq A$ for all $\tau \in\Ck(\LL).$\footnote{~ Recall that 
$\Ck(\LL)$ is the set of all \emph{support preserving bijections}, see Subsect.\ref{cong}}
\end{itemize}
\end{itemize}
\noindent
The numerosity $(\WW,\ll)$ is \emph{Euclidean}\footnote{~
Remark that \PP\ and \UP\ together imply \Eo, so $(\WW,\ll)$ is always Aristotelian.} if satisfies the Euclidean principle 

\smallskip
$~~(\mathsf{{EP}})\ 
 A\prec B\ \Iff\ \exists B'(A\subset B'\eq B),$
with  \emph{strict inclusion} and \emph{preordering}.
     \end{defn}

The idea that \emph{global} properties of sets might be tested 
\emph{``locally''} 
suggests the following 
  \begin{defn}
    Let $(\mathbb{W},\preceq) $ be a  numerosity for ``Punktmengen" over $\LL$,
 let $ \II=[\LL]^{<\og}$ be the set of all finite subsets of $\LL$,
 and for $\ A\in\WW$ and $ \ i\in\II$, let $A_i=\{a\in A\!\mid\! supp(a)\incl i\}.$
 \begin{itemize}
  \item  the  \emph{counting function} $ \Phi:\WW\to\N^\II$    is given by  
   $\Phi(A)=\la |A_i|\mid i\in\II\ra$. 
  \item  $(\mathbb{W},\preceq) $ is \emph{finitely approximable} if\ \ 
  $~
   \forall i\in \II\,\,(|A_i|\le |B_i|)
    \ \Imp\  A\ll B.$
\end{itemize} 
\end{defn}

 The following lemma shows the ``algebraic character" of finitary approximable numerosities.    
       \begin{lem}\label{cf}
The set of all differences $\Phi(A)-\Phi(B)$, with $A,B\in\WW$ covers the whole ring $\Z^{\II}$.
\end{lem}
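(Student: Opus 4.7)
My strategy is to realise any $f\in\Z^{\II}$ as $\Phi(A)-\Phi(B)$ via M\"obius inversion on the poset $(\II,\subseteq)$. Setting $n_j(A):=|\{a\in A:\mathrm{supp}(a)=j\}|\in\N$, the starting identity is
$$\Phi(A)(i)=|A_i|=\sum_{j\subseteq i}n_j(A),$$
which shows that $\Phi(A)$ is completely encoded by the ``layer counts'' $(n_j(A))_{j\in\II}$. Conversely, any prescribed map $j\mapsto n_j\in\N$ is realised by some $A\in\WW$: for each nonempty $j=\{\ell_1,\ldots,\ell_k\}\in\II$ there are already $k!$ tuples in $\LL^k$ of support exactly $j$ (and many more in higher powers), so one selects $n_j$ distinct such tuples and takes $A$ to be their union over all $j\in\II$; the finiteness of each $n_j$ guarantees that $A$ is finitary, hence in $\WW$.

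The main step is M\"obius inversion in the finite Boolean sublattices $\mathcal{P}(i)\subseteq\II$: given $f\in\Z^{\II}$, the unique $m\in\Z^{\II}$ satisfying $f(i)=\sum_{j\subseteq i}m(j)$ for all $i\in\II$ is $m(j)=\sum_{k\subseteq j}(-1)^{|j\setminus k|}f(k)$. Splitting $m=m^+-m^-$ into its positive and negative parts (both in $\N^{\II}$) and choosing $A,B\in\WW$ with layer counts $m^+$ and $m^-$ respectively, as in the first paragraph, one obtains
$$\Phi(A)(i)-\Phi(B)(i)=\sum_{j\subseteq i}\bigl(m^+(j)-m^-(j)\bigr)=\sum_{j\subseteq i}m(j)=f(i)$$
for every $i\in\II$, which is the required identity.

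The plan encounters no serious obstacle: M\"obius inversion on each $\mathcal{P}(i)$ is classical, and the realisation step needs only the availability of infinitely many tuples of each prescribed nonempty finite support, which is immediate from $\WW\subseteq\mathcal{P}(\bigcup_n\LL^n)$. The only mildly delicate point is the edge case $j=\emptyset$, where at most one tuple of empty support can exist; this is absorbed by the ``Punktmengen'' reading in which supports of genuine tuples are nonempty, so that $\II$ effectively ranges over the nonempty finite subsets of $\LL$ and the construction goes through verbatim.
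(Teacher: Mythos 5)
Your proof is correct and uses essentially the same idea as the paper's: recover the ``layer counts'' $n_j$ by M\"obius inversion (the paper calls this the inclusion-exclusion principle), split into non-negative and non-positive parts, and realize each part by choosing sufficiently many tuples of each prescribed support. The paper packages the construction as a transfinite recursion over a well-ordering of $\II$ in which proper subsets precede their supersets, adjusting one layer at a time, but the key arithmetic content is exactly your inversion identity; your version makes the step explicit and also honestly flags the $j=\emptyset$ edge case, which the paper's argument silently passes over.
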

\proof\ 
 Wellorder $\LL$ in type $\kg$ and then the set $\II$ of all finite subsets of $\LL$ according to $\ i<j\ \ifg\ \max(i\Delta j\in j)$ (hence again in type $\kg$): this ordering is \st\ all proper subsets of  any $i_\ag\in\II$ appear at stages $\bg<\ag$.
 
Given $\zk\in\Z^\II$,  define  inductively on $\ag$ increasing sequences of  finite sets $A^{(\ag)}\incl A,\ B^{(\ag)}\incl B$  whose support is exactly $i_\ag$,  in such a way that 
$|{A_i}|-|{B_i}|$ be the $i^{th}$ component of $\zk$.
Assuming this for all $\bg<\ag$,
   pick a  number of tuples whose support is exactly $i_\ag$ and  put them either in $A^{(\ag)}$ or in $B^{(\ag)}$, so as to adjust the value of 
  $|A_{i_\ag}|-|B_{i_\ag}|$ to be $z_{i_\ag}$. 
 These new elements cannot have been taken before, because the elements of any 
$A_j, B_j$ with $j$ preceding $i$ cannot have support $\linc i$, hence the $\ag^{th}$ step can be done, because of the inclusion-exclusion principle: 
$|A_{i}|=\sum_{j\pincl i}(-1)^{|i\7 j|}
\left( \begin{array}{c} |i| \\ |j|  \end{array} \right)|A_{j}|$:
\qed

\bigskip
Following \cite{EDM}, call \emph{Euclidean} a fine ultrafilters\footnote{~
   Recall that a filter on $\II$ is fine if it contains all ``cones" $C(j)=\{i\in\II\mid j\incl i\}.$}
 $\,\U$ on $\II$ if 
 $$\forall \psi\in\N^\II \,\exists U_\psi\in\U\,
  \forall i,j\in\U_\psi\ \big(i\pincl j\ \Imp\ \psi(i)\le\psi(j)\big).$$
 \noindent
Finite approximability  allows for a ``concrete" 
 strengthening of Theorem \ref{eucl}, 
leading naturally to \emph{hypernatural numbers as numerosities},
namely
 \begin{thm}\label{ult}
 There is a biunique correspondence between finitely approximable numerosities 
   $(\mathbb{W},\preceq) $  and fine ultrafilters
 $\,\U$ on $\II$.
 If $\,\U$ corresponds to  $(\mathbb{W},\preceq) $  in this correspondence, then 
$$ (\ref{ult})~~~~~~~~~~~\forall A,B\in\WW\!\ \big( A\ll B\ \Iff\ 
 \{i\in\II\mid |A_i|\le|B_i|\}\in\U
 \big),~~~~~~~~~~~~$$
and  there is
 an 
 isomorphic embedding  $\fg$ of  the semiring\! of\! numerosities $\Nk\!=\!\WW/\!\!\eq$ \ into the ultrapower $N\ult{\II}{\U}$ that makes the following diagram commute:
 
 \bigskip
\begin{center}
\begin{picture}(90,70)
   \put(0,0){\makebox(0,0){$\Nk$}}
   \put(101,0){\makebox(0,0){$\N\ult{\II}{\U}\!\pincl\Z\ult{\II}{\U}$}}
   \put(0,70){\makebox(0,0){$\WW$}}
   \put(45,35){\makebox(0,0){\diser}}
   \put(99,70){\makebox(0,0){$\N^\II\pincl \Z^\II$}}
   \put(45,6){\makebox(0,0){$\fg$}}
   \put(45,76){\makebox(0,0){$\Phi$}}
   \put(-8,35){\makebox(0,0){$\nk$}}
   \put(98,35){\makebox(0,0){$\pi_\U$}}
   \put(15,0){\vector(1,0){60}}
   \put(15,70){\vector(1,0){60}}
   \put(110,60){\vector(0,-1){50}}
    \put(0,60){\vector(0,-1){50}}
   \put(86,60){\vector(0,-1){50}}
\end{picture}
\end{center}

\begin{center}
    $($where $\Phi$ maps any $A\in\WW$ to its \emph{counting function} 
   $ \Phi({A}):\II\to\N)$

\end{center}
Moreover the set of congruence can be taken to be 
$$\Ck_\U(\WW)=\{\tau \mid \exists U\in\U\,\forall i\in U\,\tau[i]=i\}.$$
Finally the numerosity is Euclidean if and only if the ultrafilter is Euclidean.
 \end{thm}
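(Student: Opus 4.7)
The plan is to realize Theorem~\ref{ult} as a Stone-type duality: finite approximability forces the preorder $\preceq$ to be the pullback of the product order along an ultrafilter quotient, and Lemma~\ref{cf} supplies exactly enough ``room'' in $\Z^{\II}$ to recover that ultrafilter canonically from the numerosity.

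Starting from a finitely approximable numerosity $(\WW,\preceq)$, I would first use Theorem~\ref{mon} and Lemma~\ref{cf} to build a ring isomorphism $\widetilde\Phi:\Ak\xrightarrow{\sim}\Z^{\II}/\Jk$ extending $\Phi$, where $\Jk:=\{\Phi(A)-\Phi(B):A\eq B\}$. Well-definedness and injectivity use \AP\ together with finite approximability (if $\Phi(A)+\Phi(B')=\Phi(A')+\Phi(B)$ then, after passing to disjoint copies via \UP, finite approximability gives $A\cup B'\eq A'\cup B$), and surjectivity is Lemma~\ref{cf}. Pulling the total order of $\Ak$ through $\widetilde\Phi$ makes $\Z^{\II}/\Jk$ a totally ordered integral domain, so $\Jk$ is prime. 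The idempotent identity $\chi_S(1-\chi_S)=0\in\Jk$ then forces $\chi_S\in\Jk$ or $1-\chi_S\in\Jk$, and I define $\U:=\{S\incl\II:\chi_S\notin\Jk\}$; standard idempotent bookkeeping shows $\U$ is an ultrafilter. Fineness follows because each singleton $\{a\}\in\WW$ has $\Phi(\{a\})=\chi_{\{i:a\in i\}}$ with $\{a\}\succ\emptyset$, so $\{i:a\in i\}\in\U$ and $C(j)=\bigcap_{a\in j}\{i:a\in i\}\in\U$ for every $j\in\II$. For the characterization (\ref{ult}), I partition $\II=P\sqcup N\sqcup Z$ by the sign of $z_i$ where $z:=\Phi(B)-\Phi(A)$; in the integral domain $\Z^{\II}/\Jk$ exactly one of $[\chi_P],[\chi_N],[\chi_Z]$ equals $1$, and an easy case split shows $A\preceq B\Iff P\cup Z\in\U\Iff\{i:|A_i|\le|B_i|\}\in\U$. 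The embedding $\fg(\nk(A)):=[\Phi(A)]_\U$ is then well-defined, injective, and order-preserving by (\ref{ult}), and a semiring homomorphism because $\Phi$ is additive on disjoint unions (assembled via \UP-copies) and multiplicative on Cartesian products (from $\mathrm{supp}(a,b)=\mathrm{supp}(a)\cup\mathrm{supp}(b)$); the diagram then commutes by construction.

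For the converse direction, given a fine ultrafilter $\U$ on $\II$ I would declare $A\preceq B\Iff\{i:|A_i|\le|B_i|\}\in\U$ and verify the numerosity axioms directly: totality follows from $\{i:|A_i|\le|B_i|\}\cup\{i:|B_i|\le|A_i|\}=\II\in\U$; \AP\ from $|A_i|-|B_i|=|(A\7 B)_i|-|(B\7 A)_i|$; \PP\ on the cone $\{i:\mathrm{supp}(C)\incl i\}\in\U$ where $|C_i|>0$; \UP\ on $\{i:w\in i\}\in\U$ where $|A\*\{w\}|_i=|A_i|$; and \CP\ because support-preserving $\tau$ give $|\tau[A]_i|=|A_i|$ identically. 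Moreover $A\pincl B$ forces $|A_i|<|B_i|$ on the cone $C(\mathrm{supp}(B\7 A))\in\U$, giving \Ecq. Biuniqueness of the correspondence is immediate from (\ref{ult}). For the congruences, if $\tau[i]=i$ setwise for every $i$ in some $U\in\U$ then $\tau$ permutes the support-$\incl i$ tuples of $A$ within themselves, so $|\tau[A]_i|=|A_i|$ on $U$ and $\tau[A]\eq A$ by (\ref{ult}).

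The hardest part will be the Euclidean equivalence. For the forward direction, given $\psi\in\N^{\II}$ I would use Lemma~\ref{cf} to realize $\psi$ (up to a suitable shift) as $\Phi(B)-\Phi(A)$ with $A\prec B$ in $\WW$, invoke \EP\ to obtain $B'\eq B$ with $A\pincl B'$, and translate the set-theoretic inclusion into pointwise monotonicity of $\psi$ on the ultrafilter set $\{i:|A_i|\le|B'_i|\}\in\U$. For the converse, given $A\prec B$ with $\U$ Euclidean, I would consider $\psi(i):=|B_i|-|A_i|$, extract from the Euclidean property of $\U$ a set $U_\psi\in\U$ on which $\psi$ is monotone, and use this monotonicity to select tuples from $B\7 A$ indexed along the chain structure of $U_\psi$, producing $B'$ with $A\pincl B'$ and $B'\eq B$. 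The delicate point is arranging the tuple selection so that tuples of ``large'' support do not spoil the match between the pointwise monotonicity pattern on $\II$ and the set-theoretic inclusion pattern in $\WW$; this compatibility is precisely the content of Euclideanness on both sides of the correspondence.
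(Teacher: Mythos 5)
Your overall architecture matches the paper's, but you build the ultrafilter by a genuinely different device. Where the paper works combinatorially — it shows the family $\F=\{\,U_{AB}=\{i:|A_i|=|B_i|\}: A\eq B\,\}$ together with the cones has the \FIP, generates a fine filter, and then argues that totality of the induced order on $\Z^\II$ modulo that filter forces it to be an ultrafilter — you work ring-theoretically: you pass to the kernel $\Jk$ of the map $\Z^\II\to\Ak$ extending $\nk\circ\Phi^{-1}$, observe that $\Z^\II/\Jk\cong\Ak$ is a totally ordered domain so $\Jk$ is prime, and read off $\U$ from the idempotents $\chi_S$. This is a cleaner, more conceptual Stone-duality packaging of the same fact, and it hands you the ultrafilter property for free rather than via the somewhat compressed ``hence $\U$ is ultra'' of the paper. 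The forward direction (verifying \AP, \PP, \UP, \CP\ from the $\U$-definition of $\ll$) and the commutativity of the diagram are handled exactly as in the paper. Two small points in your sketch: (i) you need $\Jk$ to be an ideal, which follows once you know the map $\Phi(A)-\Phi(B)\mapsto\nk(A)-\nk(B)$ is a ring homomorphism; your well-definedness argument gives additivity, but you should note multiplicativity explicitly (it comes from $\Phi(A\times B)=\Phi(A)\cdot\Phi(B)$ and \PP); (ii) in the forward Euclidean direction the $\U$-set on which $\psi=\Phi(B)-\Phi(A)$ becomes monotone is $U_{BB'}=\{i:|B_i|=|B'_i|\}$, not $\{i:|A_i|\le|B'_i|\}$ — on $U_{BB'}$ one has $\psi(i)=|(B'\setminus A)_i|$, which is monotone because it is a counting function.

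On the Euclidean equivalence you have correctly located the crux, and you are at the same level of rigor as the paper, which simply asserts ``there exists $C$ s.t.\ $\psi=\fg(\nk(C))$'' (deferring to \cite{DNFeu}). Be aware, though, that your hedge about ``delicacy'' is more than a stylistic caveat: pairwise $\subsetneq$-monotonicity of $\psi$ on a $\U$-set is \emph{not} by itself enough to realize $\psi$ as a counting function via the Lemma~\ref{cf}-style transfinite construction, because counting functions obey an inclusion--exclusion (supermodularity) constraint — e.g.\ $|C_{i\cup j}|\ge|C_i|+|C_j|-|C_{i\cap j}|$ — that monotonicity alone does not grant. One needs the freedom to match $\psi$ only on a smaller $\U$-set and to exploit further applications of the Euclidean partition property to auxiliary functions controlling the accumulated count at each stage. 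Since the paper itself leaves this to \cite{DNFeu}, this is not a defect of your proposal relative to the paper, but if you intend a self-contained argument you would need to supply that construction rather than gesture at ``arranging the tuple selection.''
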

 
 \proof\  Given a fine ultrafilter $\,\U$ on $\II$, define $\ll$ by $(\ref{ult})$.
 Then 
 \begin{itemize}
  \item 
 {\CP} holds because congruent sets share the same counting functions,
 \item  \UP\  holds taking $\ d=supp(w)$\ and $\ U=C(d)\in\U$.
  \item 
 \AP\ holds because $|(A\cup B)_i|=|A_i\cup B_i|=|A_i|+|B_i|$, if  $(A\cap B)=\0$,   and so
 $|(A\7 B)_i|+|(A\cap B)_i|=|A_i|$.
\item \PP\ holds because $supp(a,b)=supp(a)\cup supp(b)$, hence\\
  ${~~~~~~~~~~~}(A\* B)_i=\{(a,b)\mid supp(a)\cup supp(b)\incl i\}=A_i\* B_i$.
\end{itemize} 
  Clearly these equalities continue to hold by passing to the ultrapower modulo the fine ultrafilter $\,\U$, hence  $(\mathbb{W},\preceq) $ is finitely approximable, and the unique map $\fg:\nk(A)\mapsto\pi_\U(\Phi(A))$ is welldefined and preserves sums and products.
  Finally, the preordering $\ll$ is total in any case, because $\U$ is ultra.
  
  Moreover, if $\,\U$ is Euclidean and $A\ll B$, \ie\
   the difference  $\Phi(B)-\Phi(A)$  is nonnegative on a set 
   $U\in\U$, or equivalently  is equal on $U$ to some $\psi\in\Nai$, then $\pi_\U(\psi)\in\N\ult{\II}{\U}$ and there exists $C$ \st\ $\psi=\phi(\nk(C))$, equivalently $\nk(B)+\nk(C)=\nk(A)$, so {\Diff} holds.
  
\smallskip 
 Conversely, given $(\mathbb{W},\preceq) $  finitely approximable,  the family of sets
 $$\F=\big\{U_{AB}= \{i\in\II\mid |A_i|=|B_i|\}\mid A\eq B\big\}$$  has the \FIP, so it generates a filter, which is fine because 
 $C(\{w,v\})=U_{\{w\}\{v\}}$. 
 The ring $\Ak$ generated by $\Phi[\WW]$ is $\Zai$, by Lemma \ref{cf}, and the total preordering $\ll$ induces a total ordering on $\Z\ult{\II}{\U}$, by$( \ref{ult})$, hence $\,\U$ is ultra, and the map $\phi$ is injective and preserves sums, products, and ordering of $\Nk$.
%

Moreover  $\tau\in\Ck_\U(\WW)$ trivially implies 
$\{i\in\II\mid |\tau[A]_i|=|A_i|\}\in\U$.

  Finally \EP\ implies that the difference of two counting functions, when positive on a set in $\,U$, is equivalent  modulo $\,\U$ to some counting function, hence the  set $\Nk$ of all numerosities is mapped by $\fg$ onto the ultrapower $\N\ult{\II}{\U}$. 

  \qed
 
 \bigskip
Therefore finitely approximable numerosities exist for lines $\LL$ of arbitrary cardinality, and Euclidean approximable numerosities exist for $\LL$ if and only if  there are Euclidean ultrafilters on $\II$.
We are left with the question of the existence of Euclidean ultrafilters.

The paper  \cite{je}\ studies the partition property 
$\ [A]^{<\omega}\to(\omega,{cofin})^2_\pincl$ affirming that
 \emph{any $2$-partition  $G:[[A]^{<\omega}]]^2\to\{0,1\}$ of the pairs of 
finite subsets of $A$ has a $\pincl$-cofinal homogeneous set 
$H\incl A^{<\og}$.}
In \cite{js}  its validity for $|A|=\ahu$ is established, but  the problem for larger sets is left open.
 The recent  paper \cite{EDM}  introduces the similar ``unbalanced" property
  $$\ [A]^{<\omega}\to(\omega,{cofin})^2_\pincl$$  affirming that \emph{any $2$-partition  $G:[[A]^{<\omega}]]^2\to\{0,1\}$ of the pairs of 
finite subsets of $A$, either admits a $0$-chain (\ie\  a $\pincl$-increasing sequence $a_n\in[A]^{<\og}$ with $G(a_n,a_{n+1})=0)$, or it has a $\pincl$-cofinal homogeneous set 
$H\incl A^{<\og}$ (hence necessarily $G(a,b)=1$ for all $a,b\in H,\ a\pincl b).$} 

\bigskip
This partition property is all that is needed in order to have Euclidean ultrafilters, hence Euclidean numerosities, namely
\begin{lem} \label{ultEu}${}$ \emph{(see Lemma $1.4$ of \cite{DNFeu})}

 If $\ \II\to(\og,cofin)^2_\pincl $ holds, then there are Euclidean ultrafilters on $\II$.  
\end{lem}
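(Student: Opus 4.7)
The plan is to build a fine ultrafilter $\U$ on $\II$ with the Euclidean property. First I would use the partition hypothesis to extract, for each $\psi\in\N^\II$, a $\pincl$-cofinal set on which $\psi$ is monotone; then I would fuse these witness-sets into a single fine filter by transfinite recursion and apply Zorn's lemma to extend to an ultrafilter.

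\textbf{Step 1 (the coloring).} For each $\psi\in\N^\II$ I would define the $2$-coloring $G_\psi\colon[\II]^2\to\{0,1\}$ by $G_\psi(a,b)=0$ iff $a\pincl b$ and $\psi(a)>\psi(b)$, and $G_\psi(a,b)=1$ otherwise. A $0$-chain $a_0\pincl a_1\pincl\dots$ of $G_\psi$ would produce $\psi(a_0)>\psi(a_1)>\dots$, an infinite descent in $\N$, which is impossible. Hence $\II\to(\og,cofin)^2_\pincl$ delivers a $\pincl$-cofinal set $H_\psi\subseteq\II$ on which $G_\psi\equiv 1$ on $\pincl$-pairs; equivalently, $\psi$ is $\pincl$-nondecreasing on $H_\psi$, giving the Euclidean witness for that single $\psi$.

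\textbf{Step 2 (assembling the ultrafilter).} Enumerate $\N^\II=\{\psi_\alpha:\alpha<\kappa\}$ and build a $\subseteq$-increasing chain of fine filters $\F_\alpha$: let $\F_0$ be the filter generated by the cones $C(j)$ (proper because $C(j_1)\cap\dots\cap C(j_n)=C(j_1\cup\dots\cup j_n)$); at successor $\alpha+1$, adjoin to $\F_\alpha$ a $\pincl$-cofinal set on which $\psi_\alpha$ is monotone; at limits, take unions. Then Zorn's lemma extends $\F:=\bigcup_\alpha\F_\alpha$ to a fine ultrafilter $\U$, which is Euclidean by construction, since for each $\alpha$ the chosen monotone witness already lies in $\F_{\alpha+1}\subseteq\U$.

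\textbf{The main obstacle.} The genuine difficulty is the successor step: every $F\in\F_\alpha$ is automatically $\pincl$-cofinal (it meets every cone $C(j)\in\F_\alpha$), yet two $\pincl$-cofinal subsets of $\II$ can have empty intersection, so the set $H_{\psi_\alpha}$ of Step 1 need not be compatible with $\F_\alpha$ and FIP could be violated. I would overcome this by reapplying the partition theorem to a \emph{modified coloring} $G'$ that encodes a prescribed $F\in\F_\alpha$: one designs $G'$ so that any $0$-chain of $G'$ would still force a strictly decreasing $\psi_\alpha$-sequence (hence is impossible), while any $\pincl$-cofinal $1$-homogeneous set for $G'$ is forced, up to cofinality, to lie inside $F$. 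A diagonal version encoding finitely many $F_1,\dots,F_n\in\F_\alpha$ at once—or an iteration of this device handling one filter element at a time—then produces a $\psi_\alpha$-monotone cofinal set meeting each finite intersection from $\F_\alpha$, which is precisely what FIP demands. Producing such a relativized application of $\II\to(\og,cofin)^2_\pincl$ is the main technical hurdle; once it is secured, the transfinite induction, the union, and the Zorn extension are all routine.
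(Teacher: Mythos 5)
Your Step 1 is exactly the paper's: define $G_\psi(i,j)=0$ iff $i\pincl j$ and $\psi(i)>\psi(j)$, observe that a $0$-chain would yield an infinite strictly decreasing sequence in $\N$, and invoke $\ \II\to(\og,\mathrm{cofin})^2_\pincl\ $ to obtain a $\pincl$-cofinal $1$-homogeneous set $H_\psi$ on which $\psi$ is $\pincl$-nondecreasing.

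Step 2 is where you and the paper diverge, and where your argument has a genuine gap. You correctly identify that the finite intersection property is the crux — two $\pincl$-cofinal subsets of $\II$ can be disjoint, so there is no a priori reason that $H_{\psi_\alpha}$ should be compatible with the filter $\F_\alpha$ built so far — but the fix you propose (a ``modified coloring'' $G'$ that somehow encodes a prescribed $F\in\F_\alpha$, plus a diagonalization over finitely many such $F$) is only a plan, not a construction. You never produce $G'$, and it is far from clear that the partition relation, which is stated for the full structure $[A]^{<\og}$, relativizes to an arbitrary cofinal $F\incl\II$ without reintroducing $0$-chains. As written, the proof is incomplete precisely at the step that matters.

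The paper's own resolution is much more economical and avoids the transfinite recursion entirely: given finitely many $\psi_1,\dots,\psi_n$, set $\psi=\prod_k\psi_k$ and apply the partition theorem once to $G_\psi$; the resulting $\pincl$-cofinal homogeneous set is asserted to be simultaneously $1$-homogeneous for every $G_{\psi_k}$, so that the family $\Hl=\{H_\psi\mid\psi\in\N^\II\}$, together with the cones $C(d)$ (which a cofinal set always meets), has the \FIP, and any fine ultrafilter extending $\Hl$ is Euclidean. You should scrutinize that product step with care — the passage from ``$\prod_k\psi_k$ nondecreasing on $H$'' to ``each $\psi_k$ nondecreasing on $H$'' is not a formal implication, and the paper explicitly defers the full argument to Lemma $1.4$ of \cite{DNFeu} — but it is the intended route: one clean application of the partition relation per finite set of counting functions, in place of a transfinite filtration whose successor step you leave unsettled.
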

\proof \ For $\psi\in\Nai$, define
 the partition
 $$G_{\psi}:[\II]^2_\pincl \to \{0,1\}\ \ \textrm{by}\ \ G_\psi(i,j)=\begin{cases}
  0    & \text{if } \psi(i)>\psi(j), \\
   1   & \text{otherwise}.
\end{cases}$$
Given finitely many $\psi_k\in\Nai$, let $\psi=\prod_k\psi_k$: then $\psi$
cannot admit $0$-chains,
  so there is a $\pincl$-cofinal   $1$-homogeneous set $H_\psi$, which is  
   simultaneously $1$-homogeneous for all $G_{\psi_k}$.
 Hence the family  $\Hl=\{H_\psi\mid \psi\in\Nai\}$ 
  has the \FIP,
and any fine  ultrafilter $\U$ on $\II$ including $\Hl$ is Euclidean.
\qed

\bigskip
Actually, the partition property $\ [A]^{<\omega}\to(\omega,\text{cofinal})^2_\pincl$ has been  stated for all sets $A$ of any cardinality $\kg$ as the Main Theorem of \cite{EDM}, 
so
the existence of Euclidean ultrafilters on $\II$, hence of Euclidean numerosities  satisfying the subtraction property {\Diff} is granted on the family $\WW$
of all (finitary) point sets over lines $\LL$ of arbitrary cardinality.

\section{The Weak Hume's Principle}\label{hum}
We remark that the Cantorian theory of cardinality and the Euclidean theory of numerosity might be reconciled by \emph{weakening} the first one, while \emph{slightly strengthening} the latter.
In fact, on the one hand, the Cantorian theory uses this  form of Hume's Principle 
$$\HP{~~~~~~~~~~~~~~~~~~~~~~~} A\ll B\ \ \ \Iff\ \ \ \exists f: A\to B,\, f\, 1\mbox{- to -}1.{~~~~~~~~~~~~~~~~~~~~~~~~~~}$$
instead  of Euclid's principle \EP\ 
in defining the (weak) preordering $\ll$ of sets.\\
 Then the  principles 
$\Ed$ and {\PP,\UP,\CP} 
 become provable, while $\Et$ and $\Ecq$ (hence also \AP\ and \EP) are refuted.

On the other hand,  perhaps the best way to view Aristotelian or Euclidean numerosities is looking at them as a \emph{refinement} of Cantorian cardinality, able to separate sets that, although \emph{equipotent}, have in fact \emph{really different sizes}, in particular when they are \emph{proper} subsets or supersets of one another. This  conception 
amounts in
 ``weakly Cantorianizing" the numerosity theory by  adding the sole ``only if\," part of Hume's principle:\footnote{~This property is called ``Half Cantor Principle" in \cite{BDNFar}.}
 \begin{center}
  \ \ \emph{If two sets 
 are equinumerous, then there exists a biunique 
  correspondence between  them.}
\end{center}

\subsection{Weakly Humean numerosity}\label{whum}
The idea of getting a  ``weakly Cantorian"  numerosity theory is realized
by  simply adding   the right pointing arrow of \HP\
 {(named Half Cantor Principle in \cite{BDNFar}) 
 to the principle \AP\  namely
\begin{defn}
An (Aristotelian or Euclidean) numerosity theory is\emph{ weakly Humean} if
it satisfies  the following
$${\WHP}\ \ (Weak\ Hume's\ Principle)~~~~~A\preceq B\ \ \Imp\ \  \exists 
  f\, 1\mbox{- to -}1,\, f: A\to B.{~~~~}$$
\end{defn}
If one wants to maintain also the finite approximability, it amounts to require that the fine ultrafilter $\U$ of Theorem \ref{ult} contains all sets
 $$Q^{<}_{AB}=\{i\in\II\mid |A_i|< |B_i|\},\ \textrm{for }\  |A|<|B|,$$
and this family of sets may be contained in a fine ultrafilter on $\II$ if and only if it enjoys the \FIP\ together with the family of the cones $C(d), \,d\in \II$.
Actually, this property is provable by an argument similar to that of Theorem  3.2 of \cite{BDNFuniv}.

\begin{lem}\label{whn}
Let $d\in\II$ and finitely many sets $\ A^{st}\in\WW,$  for $ s\!\in\!\! S_t,\  1\le t\le n$ 
 be given  \st\ $S_t$ is finite for all $t$, and
$ |A^{st}|=\kg_t,\  \ao\le\kg_t<\kg_u\le\kg$ for $t<u$.
Then there exists $i\in C(d)$ \st\ $|A^{st}_i|< |A^{ru}_i|$ for 
$ s\in S_t, r\in S_u,  t< u$.\
\end{lem}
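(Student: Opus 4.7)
My plan is to prove Lemma \ref{whn} by induction on the number of levels $n$. The base case $n=1$ is vacuous, since the quantification over pairs $(t,u)$ with $t<u$ is empty. For the inductive step, I would apply the inductive hypothesis to the $n-1$ lower levels (keeping the same $d$) to obtain some $i'\in\II$ with $i'\linc d$ and $|A^{st}_{i'}|<|A^{ru}_{i'}|$ for all $s\in S_t$, $r\in S_u$, $1\le t<u\le n-1$. Setting $M=\max\{|A^{st}_{i'}|\mid t<n,\ s\in S_t\}$ (a finite number), the task reduces to producing $i\in\II$ with $i\linc i'$ and $|A^{rn}_i|>M$ for every $r\in S_n$, without disturbing the lower-level inequalities.

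The key idea is to add only elements of $\LL$ outside the set $T^*:=\bigcup_{t<n,\,s\in S_t}\bigcup_{a\in A^{st}} supp(a)$. Since no newly added element lies in the support of any $a\in A^{st}$ with $t<n$, one has $A^{st}_i=A^{st}_{i'}$ for all such $s,t$, so the lower-level strict inequalities are automatically preserved; combining these with $|A^{st}_i|\le M<|A^{rn}_i|$ yields the required inequalities for pairs $(t,u)$ with $u=n$, and chaining with the $t<u<n$ inequalities gives the full statement.

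The heart of the argument is the growth claim: for every finite $i^*\linc i'$ and every $r\in S_n$ with $|A^{rn}_{i^*}|\le M$, there exists $\ell\in\LL\7(i^*\cup T^*)$ with $|A^{rn}_{i^*\cup\{\ell\}}|>|A^{rn}_{i^*}|$; iterating this at most $M+1$ times for each $r\in S_n$ produces the desired $i$. I would prove the claim by contradiction: if no such $\ell$ exists, then for every $a$ in the cardinality-$\kg_n$ set $A^{rn}\7 A^{rn}_{i^*}$, the nonempty finite set $supp(a)\7 i^*$ is contained in $T^*$. Since $|T^*|\le\kg_{n-1}$ and $\kg_{n-1}\ge\ao$, the family $[T^*]^{<\og}$ of finite subsets of $T^*$ has cardinality at most $\kg_{n-1}<\kg_n$, so by pigeonhole some fixed finite $F\incl T^*$ arises as $supp(a)\7 i^*$ for infinitely many $a\in A^{rn}$; all such $a$ satisfy $supp(a)\incl i^*\cup F$, contradicting the Punktmengen condition that forces $A^{rn}_{i^*\cup F}$ to be finite. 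This cardinality step, where the strict gap $\kg_{n-1}<\kg_n$ is indispensable, is the main obstacle: it is exactly what forces the supports of $A^{rn}$ to eventually escape $T^*$, and thereby permits boosting $|A^{rn}_i|$ without polluting the lower-level counts.
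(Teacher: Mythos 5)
Your argument has a genuine gap in the growth claim, and the gap cannot be repaired while insisting that $i$ be obtained from $i'$ by adding only points of $\LL\7 T^*$.

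First, the inference from ``no $\ell\in\LL\7(i^*\cup T^*)$ increases the count'' to ``$supp(a)\7 i^*\incl T^*$ for every $a\in A^{rn}\7 A^{rn}_{i^*}$'' is not valid: adjoining a single point $\ell$ captures a tuple $a$ only when $supp(a)\7 i^*=\{\ell\}$, so a tuple with two or more missing points outside $T^*$ is invisible to every one-point extension, yet for such a tuple $supp(a)\7 i^*\not\incl T^*$. Second, and more seriously, the growth claim itself is false. Take $A^{rn}=\{(\ell_0,\mu)\mid\mu\in M_0\}$ where $\ell_0\in T^*\7 i'$ and $M_0\incl\LL\7 T^*$ has cardinality $\kg_n$; this is a legitimate finitary point set, and $\ell_0\in T^*$ just means $\ell_0$ lies in the support of some lower-level tuple. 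Every element of $A^{rn}$ has $\ell_0$ in its support, so any $i$ built from $i'$ by adding only points outside $T^*$ satisfies $\ell_0\notin i$ and hence $A^{rn}_i=\0$; iterating one-point (or even finite-set) additions outside $T^*$ can never raise $|A^{rn}_i|$ above zero. Your pigeonhole contradiction thus never gets off the ground, because the hypothesis it is supposed to refute is simply not implied.

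The missing idea is to fix the finite ``anchor'' of each top-level set \emph{before} invoking the inductive hypothesis, so that it is absorbed into $d$. Concretely: for each $r\in S_n$ consider the map $a\mapsto supp(a)\cap T^*$ from $A^{rn}$ into $[T^*]^{<\og}$. Since $|[T^*]^{<\og}|\le\kg_{n-1}<\kg_n=|A^{rn}|$, some fibre must be infinite; choose a finite $F_r\incl T^*$ with $\{a\in A^{rn}\mid supp(a)\cap T^*=F_r\}$ infinite (this is precisely your pigeonhole, relocated). Now apply the inductive hypothesis with $d$ replaced by $d\cup\bigcup_{r\in S_n}F_r$; the resulting $i'$ already contains each anchor $F_r$. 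With $M=\max\{|A^{st}_{i'}|\mid t<n,\ s\in S_t\}$ as you defined, pick $M+1$ tuples $a$ from the infinite fibre over $F_r$ for each $r$, and adjoin $\bigcup supp(a)\7 T^*\incl\LL\7 T^*$ to $i'$. These tuples now have their entire support inside the enlarged $i$, because $supp(a)=F_r\cup(supp(a)\7 T^*)$; so $|A^{rn}_i|\ge M+1$, while the lower-level counts are untouched exactly by the mechanism you already identified. This closes the gap; the rest of your structure (induction on $n$, preservation of lower-level inequalities, the chaining across levels) is sound. The paper's own proof proceeds by an analogous level-by-level enlargement driven by the same cardinality gap $\kg_{n-1}<\kg_n$, though it leaves the anchor-selection step implicit.
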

\proof  Suppose chosen such an $i_m$ good  for all $t\le m$, and  let 
  
 \noindent   
    $$B_{m}=\bigcup \{A^{st}\,\mid\ t\le m\}, \ I_m=d\,\cup\!\bigcup_{b\in B_m}\!\!supp(b),\ \ k_m=\max \{|A^{st}_{i_m}|\!\mid\! s\in S_t, t\le m\}\!:$$ 
   then  pick
 $k_{m+1}>k_m$ elements with supports  not included in any $j\in I_m$
 in each set $A_{s\,m+1}\7 B_m, \ s\in S_{m+1}$, existing because $|B_m|=|I_m|=\kg_m$ and $|A_{s\,m+1}|>\kg_m.$ 
 Let $i_{m+1}$ be the union of $i_m$ with  the supports of the new elements: then 
$|A^{st}_{i_{m+1}}|=|A^
{st}_{i_{m}}|\le k_m$ for all $t\le m$, while
$|A^{s\,m+1}_{i_{m+1}}|\ge k_{m+1}>k_m$ for $s\in S_{m+1}$.

Proceeding in this way we come to a set $i=i_n\in \II$ belonging to

\smallskip 
$~~~~~~~~~~~~~~~~~~C(d)\cap\bigcap\{Q^<_{{A^{st}}{A^{ru}}}\mid  s\in S_t, r\in S_u, t<u\}.$ \qed

\bigskip
So the family $\Ql=\{Q^{<}_{AB}\mid  |A|<|B|\}\cup\{C(d)\mid d\in\II\}$ has
the \FIP, and any ultrafilter $\,\U\linc\Ql$ provides numerosities satisfying the weak Hume's principle.

 Thus we   obtain
   the reasonable effect that  the ordering of the Aristotelian numerosities refines the cardinal ordering on its universe.  
   
   We conjecture that the  family $\Ql$ shares the \FIP\  also with the family $\Hl$ of Lemma \ref{ultEu}, so it might be included in an Euclidean ultrafilter, and the consistency of the weak Hume principle \WHP\ with the difference property{ \Diff} will follow, but now this question remains open.

\section{Final remarks and open questions}\label{froq}

\subsection{Extending comparison to the whole universe $V$}\label{univ}
A simple way of extending an Aristotelian or Euclidean numerosity to the whole universe of sets $V$ could be obtained by coding the universe by ordinals, and associating to each set $X$ a set of ordinals $A_X$ in such a way that
\begin{enumerate}
  \item $A_X\cup A_Y= A_{X\cup Y}$ if $X\cap Y=\0;$
    \item $A_X\* A_Y\eq A_{X\* Y}$ if $(X\cup Y)\cap( X\*Y)=\0=X\cap Y.$
\end{enumerate}
Recalling that finite set of ordinals are appropriately coded by the \emph{natural sum of the powers} $2^\ag$ of its members, we put
$$\cg_\0=0,~~~~~\cg_{\{x_1,\ldots,x_n\}}=\bigoplus_{i=1}^n2^{\cg_{x_i}},$$
where $\cg_x$ is the ordinal coding the set $x$, 
so, in particular, each hereditarily finite set in $V_\omega$ receives its natural code in $\og$.

Now,  in order to avoid clashings with finite sets,  the codes of infinite sets
have to be chosen additively indecomposable, \ie\ pure powers of $2$ (but avoiding the so called $\eg$-numers 
$\eg=2^\eg$, which woud be confused with their singletons).
Therefore we put $\cg_\ag=2^{\ag+1}$ for each infinite ordinal $\ag$, and 
$\cg_x=\og^{\xi(x)}=2^{\og\xi(x)}$, where $\xi$ picks an ordinal in $\beth_{\ag+1}\7 \beth_\ag$ for each infinite set $x\in V_{\ag+1}\7 V_{\ag}$.
Then let 
 $$A_X=\{\cg_x\mid x\in X\},\ \ A_X*A_Y= A_{\{\{x,y\}\mid x\in X,\ y\in Y\}}=
\{\cg_{\{x,y\}}\mid x\in X, y\in Y\}.$$
Then $A_X$ is a set of ordinals, 
$A_X\cup A_Y
= A_{X\cup Y}$ if $X\cap Y=\0,$ and the ``Russellian\footnote{~
Actually Bertrand Russell warmly suggested the use of this product, which is naturally commutative and associative.}
doubleton-product" \ 
$A_X*A_Y$
might replace the Cartesian product $X\*Y$ when
$X\cap Y\cap \{\{x,y\}\mid x\in X,\ y\in Y\}=\0.$

\medskip
Let $\II$ be the class of all finite sets of ordinals, and define the counting function  $f_X: X\to\Z^\II$ by 
$f_X(i)=|(A_X)_i|=|A_X\cap i|$: then

$|(A_X)_i|+{(A_Y)_i|= |(A_{X\cup Y})_i} $ if $X\cap Y=\0$, and

$|(A_{X})_i|\cdot|(A_Y)_i|=|(A_X*A_Y)_i|$ if  
$X\cap Y\cap \{\{x,y\}\mid x\in X,\ y\in Y\}=\0$.

\medskip
When considering, instead of the universal class $V$, an initial segment $V_\kg$ with $\kg$ any inaccessible cardinal, then all works as in Theorem \ref{ult}, giving a biunique correspondence between fine ultrafilters $\U_\kg$ on $\II_\kg=[\kg]^{<\og}$ and finitely approximable numerosities $(V_\kg,\<)$, and the semiring of numerosities would become the ultrapower $\Nk_\kg=\N\ult{\II_\kg}{\U_\kg}$.

But in the case of the whole universe $V$,  the coding function $\Cg:x\mapsto\cg_x$ as well as $\II$ and $Ord$ are proper classes, so one cannot operate in \zfc, but has to work in some theory of classes, like G\"odel-Bernays theory $\textsf{GB}$ (and assume global choice, which is stronger than Zermelo's 
$\textsf{AC}$). Then one can proceed as in \cite{BDNFar}: fix an unbounded increasing sequence of cardinals $\kg$ and  ``coherent" fine ultrafilters 
$\U_\kg$ on $\II_\kg=[\kg]^{<\og}$. \Ie,  if $\kg'$ is the successor of $\kg$, then 
$\U_{\kg'}$ induces on $\II_\kg$ the equivalence $\equiv_{\U_\kg}$; at limit steps  take the union: then the class $\Nk$, direct limit of the ultrapowers
$\N\ult{\II_\kg}{\U_\kg}$, is a proper class semiring of hyperintegers suitable for assigning a numerosity to all sets of the universe. 

\subsection{The Subset Property}\label{sub}
An interesting consequence of assuming 
 $\WHP$ is the Subset Property 
 $${\SubP}~~~~~~~~~~~~~~~~~~~~~~~~ A\< B\ \ \Iff\ \ \exists A' (A\eq A'\pincl B).~~~~~~~~~~~~~~~~~~~~~~~~~~~~~~~$$
The set of numerosities 
 is very large, having the same size as the universe $W$: this  is a necessary consequence of Euclid's Principle, since one can define strictly increasing chains of sets of arbitrary length. However, any  set $A$ has only $2^{|A|}$ subsets, and so,
by assuming the Subset Property $\SubP$, instead of defining the preordering  (as we did here) through the superset property $\EP$, we would obtain that
 \emph{the initial segment of numerosities}  generated by $\nk(A)$ has size $2^{|A|}$, 
contrary, \pes, to the  large ultrapower models of \cite{BDNFar,BDNFuniv}.

This remark  proves that the set of  numerosities of sets of cardinality not exceeding $\kg$ is not forced \emph{a priori }to have cardinality exceeding $2^\kg$, 
independently of the size of the universe, by simply assuming the
Weak Hume Principle \WHP.

\subsection{The power of numerosities}\label{pow}
According to Theorem \ref{ultEu}, the power $\mk^{\nk}$ of infinite numerosities  is always well-defined, since 
numerosities are
    \emph{positive nonstandard integers.}
By using finite  approximations given by  intersections with \emph{suitable finite sets}     the interesting  relation
    $$2^{\nk(X)} = \nk([X]^{<\og})$$
    has been obtained in \cite{BDNFar}.
     Similarly, the principle \FAP\  might be adapted to provide the natural general set theoretic interpretation of powers:
   $$\mk(Y)^{\nk(X)} = \nk(\{f:X\to Y\mid |f| <\ao\}).$$ 
   \Pes\ one might assign to a finite function $f$ between sets in $\WW$ a ``support" equal to the union of the supports of the elements of domain and range of $f$.
 
 \smallskip   
The difficult problem of finding  appropriately defined arithmetic operations that give instead the numerosity of
    the \emph{function sets}  $Y^X$, or even only the \emph{full powersets }$\P(X)$, requires a quite different
    approach, and the history of the same problem for cardinalities suggests that it could not be completely solved.

\bigskip{}

\end{document}